\DeclareMathOperator{\dom}{dom}
\def\mR{{\mathbb R}}
\DeclareMathOperator{\diag}{diag} 
\DeclareMathOperator{\Span}{span} % in preamble
\newcommand{\Tau}{\mathcal{T}}
\def\XXint#1#2#3{{\setbox0=\hbox{$#1{#2#3}{\int}$} 
		\vcenter{\hbox{$#2#3$}}\kern-.5\wd0}}
\renewcommand{\hat}[1]{\widehat{#1}}
\renewcommand{\theta}{\vartheta}
\renewcommand{\epsilon}{\varepsilon}
\def\minwrt[#1]{\underset{#1}{\text{minimize }}}
\def\argminwrt[#1]{\underset{#1}{\text{arg min }}}
\def\maxwrt[#1]{\underset{#1}{\text{maximize }}}
\def\argmaxwrt[#1]{\underset{#1}{\text{arg max }}}
\def\maxemphwrt[#1]{\underset{#1}{\text{\emph{maximize} }}}
\def\minwrt[#1]{\underset{#1}{\text{minimize }}}
\def\argminwrt[#1]{\underset{#1}{\text{arg min }}}
\def\maxwrt[#1]{\underset{#1}{\text{maximize }}}
\def\argmaxwrt[#1]{\underset{#1}{\text{arg max }}}
\def\maxemphwrt[#1]{\underset{#1}{\text{\emph{maximize} }}}
\newcommand{\ett}{{\bf 1}}
\newtheorem{remark}{Remark}
\newtheorem{proposition}{Proposition}
\newtheorem{corollary}{Corollary}
\newtheorem{lemma}{Lemma}
\newtheorem{definition}{Definition}
\newtheorem{assumption}{Assumption}
\crefname{assumption}{Assumption}{Assumptions}
\Crefname{assumption}{Assumption}{Assumptions}
\def\ccD{{\mathcal{D}}}
\def\RN{{\mathbb{N}}}
\def\RR{{\mathbb{R}}}
\definecolor{Michele}{HTML}{009B55}
\DeclareMathOperator{\suto}{subject\;to}
\DeclareMathOperator*{\argmin}{arg\,min}
\newcommand{\T}{\Tau}
\title{\LARGE \bf
A convex approach for Markov chain estimation\\ from aggregate data via inverse optimal transport}
\author{Michele Mascherpa,  Axel Ringh, Amirhossein Taghvaei, Johan Karlsson% <-this % stops a space
\thanks{This work was partially supported by KTH Digital Futures,
by the Wallenberg AI, Autonomous Systems and Software Program (WASP) funded by the Knut and Alice Wallenberg Foundation, Sweden, the Swedish Research Council (VR) under grant 2020-03454, %Funding Axel \
and by National Science Foundation (NSF) awards EPCN-2318977, EPCN-2347358.}% <-this % stops a space
\thanks{M.~Mascherpa and J.~Karlsson are with the Department of Mathematics and Digital Futures, KTH Royal Institute of Technology, Stockholm. {\tt\small micmas@kth.se}, {\tt\small johan.karlsson@math.kth.se}.
A.~Ringh  is with the Department of Mathematical Sciences,  Chalmers University of Technology and University of Gothenburg, Gothenburg, Sweden. {\tt\small axelri@chalmers.se}.
Amirhossein Taghvaei is with William E. Boeing department of Aeronautics and Astronautics at the University of Washington (UW) Seattle, {\tt\small amirtag@uw.edu}.}
}
\begin{document}

% USED TO SEE WHERE THERE IS SPACE LEFT
%\raggedbottom

\maketitle
\thispagestyle{empty}
\pagestyle{empty}

%\today\\

%%%%%%%%%%%%%%%%%%%%%%%%%%%%%%%%%%%%%%%%%%%%%%%%%%%%%%%%%%%%%%%%%%%%%%%%%%%%%%%%
\begin{abstract}
We address the problem of identifying the dynamical law governing the evolution of a population of indistinguishable particles, when only aggregate distributions at successive times are observed. Assuming a Markovian evolution on a discrete state space, the task reduces to estimating the underlying transition probability matrix from distributional data. We formulate this inverse problem within the framework of entropic optimal transport, as a joint optimization over the transition matrix and the transport plans connecting successive distributions. This formulation results in a convex optimization problem, and we propose an efficient iterative algorithm based on the entropic proximal method. We illustrate the accuracy and convergence of the method in two numerical setups, considering estimation from independent snapshots and estimation from a time series of aggregate observations, respectively.

\end{abstract}

%%%%%%%%%%%%%%%%%%%%%%%%%%%%%%%%%%%%%%%%%%%%%%%%%%%%%%%%%%%%%%%%%%%%%%%%%%%%%%%%

\section{Introduction}
Consider a scenario where a large number of indistinguishable particles move according to an unknown dynamical law. Although the individual trajectories of the particles are not accessible, one can observe their aggregate configurations at successive time instances. The fundamental challenge is to identify the underlying—potentially stochastic—law that governs the evolution of these particles using only such aggregate data. This inverse problem arises naturally in diverse domains. In fluid mechanics, for example, Particle Tracking Velocimetry (PTV) techniques rely on the movement of tracer particles illuminated by lasers to infer local flow dynamics \cite{dabiri2019particle}. Beyond fluid flow estimation, similar challenges appear in many other applications. For example in orbital object tracking, where space debris or satellites follow uncertain dynamical laws \cite{anz2020orbital,berger2020flying}, and in swarm robotics, where collective motion patterns of large groups of simple agents must be analyzed and predicted \cite{rubenstein2012kilobot}. Further applications include understanding pedestrian movement from anonymized cell phone data \cite{willenborg2025using} or inferring individual-level animal behavior from aggregate ecology data \cite{king2013solution}.

Given the indistinguishable nature of the observed particles, it is natural to represent each snapshot as a distribution over the state space. When this space is discrete, the aggregate evolution can be modeled as a sequence of distributions generated by a Markov chain. In this setting, the underlying stochastic law corresponds to the transition probability matrix that governs how mass is transported between states over time. The key scientific problem, therefore, is to estimate this transition matrix from a sequence of noisy or incomplete distributional measurements.

The estimation of transition probabilities from Markov models is a classical topic \cite{miller1952finite, kemeny1969finite}, and in particular the estimation from aggregate discrete-time observations has been considered  in the monograph \cite{lee1977estimating}, which derives least-squares and maximum-likelihood estimators formulated as quadratic programming problems.
For a continuous-time Markov process, the same type of estimators have been studied in \cite{kalbfleisch1983estimation}, whose results are consolidated in~\cite{lawless1984information}, analyzing the amount of information contained in aggregate data and showing the efficiency loss that arises when individual transitions are unobserved. 
In \cite{bernstein2016consistently} the scenario in which data are corrupted by noise is considered, and an estimator based on the method of moments is proposed.

While previous methods are mainly based on least squares or covariance/moment based estimates, we here propose a new likelihood based method using concepts from optimal transport. The optimal transport problem, which is a classical problem \cite{villani2021topics}, has recently been used to address problems in estimation and control \cite{chen2021optimal, haasler2021control, mascherpa2023estimating, terpin2024dynamic, emerick2024causal}. 
In \cite{stuart2020inverse} the concept of inverse optimal transport was introduced, in which, given the marginals, the nonconvex problem of simultaneously finding both the cost and the transport matrices is considered. 

This concept has been explored in various settings, often motivated by applications in biology.
In \cite{lamoline2025dynamic} and \cite{elvander2025mixtures} inverse optimal transport problems are used to identify the dynamics given sampling data, and nonconvex problems  of jointly estimating dynamics and transport plans are considered. 
Other related works include  \cite{swaminathan2014identification}, where Markov-chain identification from distributional data is posed as maximum a posteriori estimation with a multinomial likelihood under dynamic constraints, yielding a non-convex program.

Our approach can be seen as a regularized version of inverse optimal transport, where transport plans and transition probabilities are estimated. This formulation, however, is jointly convex and 
admits an efficient proximal algorithm that updates the transport matrices and normalizes their aggregate to recover the transition probabilities. We derive the corresponding dual problem, which we use to provide uniqueness conditions as well as convergence of the method.

The article is organized as follows. In \Cref{sec:background}, we provide background on the Schrödinger bridge problem and its connection to entropic optimal transport. In \Cref{sec:problem}, we formulate the joint optimization problem over transport matrices and the transition probability matrix, and analyze existence, uniqueness, and the corresponding dual formulation. In \Cref{sec:nm}, we introduce an iterative algorithm for efficiently solving the problem, while \Cref{sec:application} discuss identifiability issues and presents numerical results under varying data dependencies and noise levels. The paper concludes with a summary of findings in \Cref{sec:conc}.

\section{Background}
\label{sec:background}
\subsection{Notation}
We denote with $\ett_n$ a vector of ones, omitting the dimension $n$ if clear from context. With $\odot$ we indicate entrywise matrix multiplication, and we use $\langle \cdot,\cdot\rangle$ to denote the standard (vector/Frobenius) inner product.
The nonnegative orthant of is denoted with $\mR^n_+$, and $[m]:=\{1,\ldots,m\}$.

\subsection{Log-likelihoods for collections of particles in a Markov chain and connection to optimal transport} \label{subsect:schrodinger}

Consider a collection of indistinguishable particles, where each particle evolves over a finite set of states $X= \left\{ X_1, X_2, \dots, X_n \right\}$ \cite{haasler2019estimating, pavon2010discrete}. Assume that the underlying probability law is specified by a state transition matrix $A=\left[a_{ij}\right]_{i,j = 1}^n$, where $a_{ij} = P( q_{t+1}=X_j | q_t = X_i )$, and $q_t$ denote the state of a particle at time $t$.
Now consider the case where we observe the particle distributions $\mu_0,\mu_1\in \RN^{n}$ before and after a transition, %and $\mu_1\in \RN^{n}$ at times $t\in \{0,1\}$, represented by vectors $\mu_t \in \RN^{n}$. 
where the $i$-th component $\mu_t(i)$ denotes the number of particles in state $X_i$ at time $t\in \{0,1\}$.
The particle transitions between states can be described by a matrix $M=[m_{ij}]_{i,j=1}^n$, where $m_{ij}$ denotes the number of particles that move from state $X_i$ to state $X_j$.

Given the initial distribution $\mu_0$ and the transition probability matrix $A$, the probability that the particles evolve according to a given transition matrix $M$ is 
\begin{equation} \label{eq:probabilityM}
P_{\mu_0,A}( M ) = \prod_{i=1}^{n} \left( \binom{\mu_0(i)}{m_{i1},m_{i2},\dots,m_{in}} \prod_{j=1}^{n} a_{ij}^{m_{ij}} \right),
\end{equation}
where $\binom{\cdot}{\cdot, \ldots, \cdot}$ denotes a multinomial coefficient.
When the number of particles $N$ grows, the probability of a given transition can be approximated by the relative entropy \cite{haasler2019estimating}
\[
P_{\mu_0^{(N)}\!,\, A}(M^{(N)})\approx e^{-\ccD \left( M^{(N)} \, \big| \, \diag(\mu_0^{(N)})A  \right)}.
\]
where $\ccD$ is the Kullback-Leibler (KL) divergence.
\begin{definition}
Let $p$ and $q$ be two nonnegative vectors or matrices of the same dimension. The normalized Kullback-Leibler (KL) divergence of $p$ from $q$ is defined as 
\begin{equation}
\label{eq:kl}
    \ccD(p|q):=\sum_i p_i \log \frac{p_i}{q_i},
\end{equation}
where $0\log 0$ is defined to be $0$. 
\end{definition}
This formulation allows us to both approximate the negative log-likelihood of \eqref{eq:probabilityM} in terms the KL divergence, and to consider particles as continuous quantities, that is, $M\in \RR_+^{n\times n}$; and as the number of particles becomes large, the discrete particle distributions can be approximated by continuous densities.
A feasible evolution matrix between the two distributions $\mu_0\in\RR_+^{n}$ and $\mu_1\in\RR_+^{n}$ must satisfy the marginal constraints $M \ett=\mu_0$ and $M^T \ett=\mu_1$.
Hence, the most likely evolution matrix $M$ between the two distributions can be approximated by the solution to
\begin{equation} \label{eq:sb_1T}
\begin{aligned}
	\min_{M \in \mR^{n\times n}_+} \; \ &  \ccD( M \,|\,\diag(\mu_{0})A) \\
	\text{subject to } \  &  M \ett = \mu_{0}  , \ \  M^T \ett = \mu_{1} .
 \end{aligned}
\end{equation}
This problem can naturally be extended to a Markov chain of length $\T$, and can be referred to as a time- and space-discrete form of the Schrödinger bridge problem \cite{pavon2010discrete, haasler2019estimating, haasler2021tree, mascherpa2023estimating}.

It should be noted that the formulation \eqref{eq:sb_1T} is closely connected to the (entropy regularized) optimal transport problem. In fact, if $A$ is strictly positive, then \eqref{eq:sb_1T} is equivalent to the problem
\begin{equation} \label{eq:sb_1T11}
\begin{aligned}
	\min_{M \in \mR^{n\times n}_+} \; \ &  \langle C,M\rangle+\ccD( M \,|\ett_{n\times n}) \\
	\text{subject to } \  &  M \ett = \mu_{0}  , \ \  M^T \ett = \mu_{1},
 \end{aligned}
\end{equation}
where the cost matrix is $C=-\log(\diag(\mu_0)A)$. A main difference between these two formulations, that we will explore in this paper, is that  $\ccD( M \,|\,\diag(\mu_{0})A)$
is jointly convex in $M$ and $A$, whereas 
$\langle C,M\rangle$ is not jointly convex in $M$ and $C$.

\section{Main results: Problem formulation, duality, existence and uniqueness}
%\subsection{Problem Formulation}
\label{sec:problem}

For a sequence of observed marginal pairs $(\mu_t,\nu_t)$, with $t=1,\ldots,\T$, satisfying $\mu_t^T \ett = \nu_t^T \ett$ for all $t$, we want to simultaneously find the $\T$ transport matrices $M_t \in \mR^{n \times n}_+$ connecting each initial distribution $\mu_t \in \mR^n_+$ to its corresponding final distribution $\nu_t \in \mR^n_+$, and their common prior $A \in \mR^{n \times n}_+$, which is the most likely discrete Markov chain, in the Kullback-Leibler divergence sense, to have generated the marginal observations. Formulated as an optimization problem, this means that we want to solve
\begin{subequations}
\label{eq:primalfirstform}
\begin{align}
    \min_{\substack{A, M_t\in \mR^{n\times n}_+\\ t \in [\T]}} \quad &\sum_{t=1}^\T \ccD(M_t|\diag(\mu_t)A)\\
    \suto\quad & A\ett=\ett \label{eq:normal}\\
                    & M_t\ett =\mu_t \qquad \;\mbox{for } t\in[\T] \label{eq:firstconst}\\
                        &M_t^T\ett=\nu_t \qquad  \mbox{for } t\in[\T].\label{eq:secondconst}
\end{align}
\end{subequations}
Constraint \eqref{eq:normal} ensure that the matrix $A$ is row-stochastic, and thus its entries can be interpreted as transition probabilities. Conditions \eqref{eq:firstconst} and \eqref{eq:secondconst} are marginal constraints on the mass transport matrices $M_t$.

Note that, under the above constraints, we have
\begin{align*}
&\ccD(M_t|\diag(\mu_t)A)=\sum_{i,j}M_t(i,j)\log \left(\frac{M_t(i,j)}{\mu_t(i) {A}(i,j)}\right)\\
&=\sum_{i,j}M_t(i,j)\log \left(\frac{M_t(i,j)}{ A(i,j)}\right)-\sum_{i}\mu_t(i)\log \mu_t(i).
\end{align*}
The last term is constant and thus does not affect the optimization. 
The problem can thus be formulated as
\begin{equation}
\label{eq:problem2}
\begin{aligned}
     \min_{\substack{A, M_t\in \mR^{n\times n}_+\\ t \in [\T]} }\quad &\sum_{t=1}^\T \ccD(M_t|A)\\
    \suto\quad & A\ett=\ett\\
                     & M_t\ett =\mu_t && \mbox{for } t \in [\T] \\
                        &M_t^T\ett=\nu_t && \mbox{for } t\in[\T].
\end{aligned}
\end{equation}
We first consider the minimization over $A$. In this case, we have a Lagrangian 
\[
\mathcal{L}_A(A, \gamma) = \sum_{t=1}^\T \ccD(M_t|A) + \gamma^T(\ett - A\ett).
\]
The minimizer with respect to $A$ is given by
\begin{equation}
\label{eq:optimalA}
    A(i,j)=\frac{\sum_{t=1}^\T M_t(i,j)}{\sum_{t,\ell} M_t(i,\ell)}=\frac{\bar M(i,j)}{\sum_\ell \bar M(i,\ell)},
\end{equation}
where we have introduced $\bar {M}(i,j) := \sum_{t=1}^\T M_t(i,j)$.
Plugging \eqref{eq:optimalA} into \eqref{eq:problem2}, and removing constant terms in the objective function, we obtain the equivalent formulation
\begin{equation}
\label{eq:newformulation}
\begin{aligned}
    \min_{\substack{\bar M, M_t\in \mR^{n\times n}_+\\ t \in [\T]} } \quad &\sum_{t=1}^\T \ccD(M_t|\bar{M})\\
    \suto\quad  & M_t\ett =\mu_t && \mbox{for }  t\in [\T] \\
                        &M_t^T\ett=\nu_t && \mbox{for }  t\in [\T] \\
                        & \bar {M} = \sum_{t=1}^\T M_t,
\end{aligned}
\end{equation}
where $A$ is removed from the problem and can be recovered from $\bar M$ via \eqref{eq:optimalA}.

\subsection{Existence of an optimal solution}
We make the following mild assumption on the observed marginals $\mu_t,\nu_t$.
\begin{assumption}\label{as:nonzero}
     For all index pairs $(i,j)\in [n]\times [n]$, there exists $t \in [\T]$ such that $\mu_t(i)>0$ and $\nu_t(j)>0$.
\end{assumption}

%\ar{[Made into remark, to make stand out more.]}
\begin{remark}
If $\mu_t(i)=0$ or $\nu_t(j)=0$, then the only feasible value is $M_t(i,j)=0$. In particular, if \Cref{as:nonzero} does not hold, we have that $M_t(i,j)=0$ for all $t$, implying $\bar M(i,j)=0$, and the problem can be restricted to the remaining variables.
\end{remark}

\begin{lemma}
\label{lemma:feasiblity}
    Under \Cref{as:nonzero}, there exist a feasible solution to \eqref{eq:newformulation} such that $\bar M >0$.
\end{lemma}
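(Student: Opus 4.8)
The plan is to prove feasibility constructively by writing down an explicit feasible point and then checking that \Cref{as:nonzero} forces its aggregate to be entrywise positive. The natural candidate is the \emph{independent (product) coupling} at each time step, since it always connects two marginals of equal total mass and its support is exactly the product of the supports of the two marginals. Concretely, for each $t\in[\T]$ set $s_t:=\mu_t^T\ett=\nu_t^T\ett$ (the common total mass, which is well defined by the standing assumption $\mu_t^T\ett=\nu_t^T\ett$), and define
\[
M_t(i,j):=\frac{\mu_t(i)\,\nu_t(j)}{s_t}\quad\text{whenever }s_t>0,
\]
and $M_t:=0$ when $s_t=0$. Finally put $\bar M=\sum_{t=1}^\T M_t$, so the last constraint of \eqref{eq:newformulation} holds by construction.

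The verification then splits into two routine checks. First, feasibility: each $M_t$ is clearly nonnegative, and a one-line computation gives the row sums $\sum_j M_t(i,j)=\mu_t(i)\,(\nu_t^T\ett)/s_t=\mu_t(i)$ and the column sums $\sum_i M_t(i,j)=\nu_t(j)\,(\mu_t^T\ett)/s_t=\nu_t(j)$, using $s_t=\mu_t^T\ett=\nu_t^T\ett$; hence constraints \eqref{eq:firstconst} and \eqref{eq:secondconst} are satisfied (trivially so when $s_t=0$, since then $\mu_t=\nu_t=0$ and $M_t=0$ is the unique feasible choice). Second, strict positivity of the aggregate: each summand in $\bar M(i,j)=\sum_t M_t(i,j)$ is nonnegative, so it suffices to produce one strictly positive term for each $(i,j)$. \Cref{as:nonzero} supplies an index $t^\ast$ with $\mu_{t^\ast}(i)>0$ and $\nu_{t^\ast}(j)>0$; for this $t^\ast$ we have $s_{t^\ast}\ge\mu_{t^\ast}(i)>0$, so $M_{t^\ast}(i,j)=\mu_{t^\ast}(i)\nu_{t^\ast}(j)/s_{t^\ast}>0$, giving $\bar M(i,j)>0$. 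Since $(i,j)$ was arbitrary, $\bar M>0$.

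There is no deep obstacle here; the statement is essentially a well-posedness check, and the entire content is choosing a feasible point whose support is large enough. The one point requiring care is the degenerate case $s_t=0$, where the product-coupling formula would divide by zero: I handle it by defining $M_t=0$ there, which is consistent because $\mu_t^T\ett=\nu_t^T\ett=0$ forces $\mu_t=\nu_t=0$ and hence $M_t=0$ is both the only feasible and a harmless choice. This also explains why \Cref{as:nonzero} is exactly the right hypothesis: it guarantees that for every entry $(i,j)$ at least one time step has \emph{positive} mass on both the source state $i$ and the target state $j$, which is precisely what is needed to make the corresponding term of $\bar M$ strictly positive.
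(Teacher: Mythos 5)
Your proof is correct and follows essentially the same route as the paper: both use the product coupling $M_t=\tfrac{1}{s_t}\mu_t\nu_t^T$ and invoke \Cref{as:nonzero} to get $\bar M>0$. Your explicit treatment of the degenerate case $s_t=0$ is a minor refinement the paper omits, but it does not change the argument.
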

\begin{proof}
    Let $s_t:=\mu_t^T\ett=\nu_t^T\ett$. Consider $M_t:=\frac{1}{s_t}\mu_t\nu_t^T$, for all $t\in[\T]$. Then $M_t\geq0, M_t \ett=\mu_t$ and $M_t^T \ett =\nu_t$ for all $t$. Furthermore, since for all $(i,j)$ there exists $t \in [\T]$ such that $\mu_t(i)>0$ and $\nu_t(j)>0$, we also have $\bar M = \sum_{t=1}^\T M_t = \sum_{t=1}^\T \frac{1}{s_t}\mu_t\nu_t^T>0$.
\end{proof}

The next result is that the set of optimal solutions of \eqref{eq:newformulation} is non-empty.
\begin{proposition}
\label{prop:existence}
    There exists an optimal solution to \eqref{eq:newformulation}. 
\end{proposition}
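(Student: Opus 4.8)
The plan is to invoke the Weierstrass extreme value theorem in its lower-semicontinuous form: a lower semicontinuous function attains its minimum on a nonempty compact set. Accordingly, I would split the argument into verifying (i) that the feasible set of \eqref{eq:newformulation} is nonempty and compact, and (ii) that the objective is finite-valued and lower semicontinuous on it.

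For (i), nonemptiness is immediate from \Cref{lemma:feasiblity}, which exhibits an explicit feasible point. The feasible set is closed, being the intersection of the closed nonnegative orthant with the affine subspaces defined by the marginal constraints of \eqref{eq:newformulation} and the defining relation $\bar M = \sum_{t=1}^\T M_t$. For boundedness I would use the row-sum constraint $M_t \ett = \mu_t$: since each $M_t$ has nonnegative entries, one has $0 \le M_t(i,j) \le \mu_t(i)$ for all $i,j$ and $t$, so each $M_t$ lies in a bounded box, and consequently $\bar M = \sum_{t=1}^\T M_t$ is bounded as well. Hence the feasible set is compact.

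For (ii), the key observation is that the constraint $\bar M = \sum_{t=1}^\T M_t$ together with $M_t \ge 0$ guarantees that $\bar M(i,j) = 0$ forces $M_t(i,j) = 0$ for every $t$. Therefore the pathological term ``positive numerator over zero denominator'' never arises in $\ccD(M_t \,|\, \bar M)$, and the objective is finite at every feasible point (using the convention $0\log 0 = 0$). Lower semicontinuity is the delicate point, and the main obstacle: on the relative interior the objective is smooth, but on the boundary—where entries of $M_t$ or $\bar M$ vanish—ordinary continuity can fail. I would establish lower semicontinuity by noting that each scalar map $(p,q) \mapsto p\log(p/q)$ is the perspective of the convex function $s \mapsto s\log s$, hence jointly convex and lower semicontinuous on $\mR_+ \times \mR_+$; the objective is then a finite sum of such terms composed with the linear map $(M_1,\dots,M_\T) \mapsto (M_t, \bar M)$, and thus inherits lower semicontinuity.

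Combining (i) and (ii), the objective is a lower semicontinuous, finite-valued function on a nonempty compact set, so a minimizer exists, which is precisely the claim. The only step requiring genuine care is the boundary behavior of the KL divergence in (ii); everything else is a routine verification of closedness and boundedness.
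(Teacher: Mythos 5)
Your proof is correct and follows the same overall skeleton as the paper's: exhibit a feasible point via \Cref{lemma:feasiblity}, show the feasible set is closed and bounded (using exactly the same bound $0 \le M_t(i,j) \le \mu_t(i)$), and invoke Weierstrass. The one place where you genuinely diverge is the treatment of the objective on the boundary of the feasible set. The paper argues that the objective is in fact \emph{continuous} there, by observing that $M_t(i,j)\log\bigl(M_t(i,j)/\bar M(i,j)\bigr)\to 0$ as both entries tend to $0$, which relies on the feasibility-induced inequality $M_t(i,j)\le \bar M(i,j)$. You instead settle for lower semicontinuity, obtained by recognizing each term $(p,q)\mapsto p\log(p/q)$ as the closed perspective of $s\mapsto s\log s$ and composing with the linear map $(M_1,\dots,M_\T)\mapsto(M_t,\bar M)$; this is a more standard and slightly more robust route, since it does not require analyzing limits along the boundary, and lsc is all that Weierstrass needs. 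Both arguments use the same structural fact ($\bar M(i,j)=0$ forces $M_t(i,j)=0$ on the feasible set) to rule out the $+\infty$ case, yours to conclude finiteness, the paper's to conclude continuity. One minor point: \Cref{lemma:feasiblity} is stated under \Cref{as:nonzero}, and the paper's proof explicitly patches the construction when that assumption fails by zeroing the corresponding entries; your appeal to the lemma implicitly assumes \Cref{as:nonzero} holds, so you should either add that hypothesis or include the same patch.
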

\begin{proof}
    A feasible solution can be constructed as in \Cref{lemma:feasiblity}, where we set $M_t(i,j)=0$ for the index pairs where \Cref{as:nonzero} does not hold.    
%    of \eqref{eq:newformulation} is non-empty due to \Cref{lemma:feasiblity}.
%The feasible set of \eqref{eq:newformulation} is non-empty due to \Cref{lemma:feasiblity}. 
    Furthermore, the feasible set is bounded, as $ 0\leq M_t(i,j)\leq \mu_t(i)$ for all $t\in [\T]$ and $i,j\in [n]$, and closed as it is defined by linear equality and inequality constraints. The objective function is finite and continuous, as it is smooth on the interior, and on the boundary we adopt the usual continuous extension
    $0\log\frac{0}{0}:=0$. Note that $M_t(i,j)\log(M_t(i,j)/\bar M(i,j))\to 0$ as $M_t(i,j),\bar M(i,j)\to 0$ since $M_t(i,j)\le \bar M(i,j)$.
    By Weierstrass theorem we can then conclude existence of a minimizer.
\end{proof}

\subsection{Duality}
In order to derive the corresponding dual problem the following lemma is useful.

\begin{lemma} 
\label{lemma:dualconstr}
    Let $\lambda \in \mR^\T$. The problem
    \[\inf_{x\in \mR_+^\T} \sum_{t=1}^\T \Big( x_t\log\frac{x_t}{\bar x} {\Big)}-\langle x,\lambda\rangle,
    \]
where $\bar x=\ett^T x$, has an optimal solution if and only if $\sum_t \exp(\lambda_t)\le 1$. In this case, the minimum value is 0 and the set of optimal solutions is given by
\begin{align*}
&\{0\}  && \mbox{ if }\quad     \sum_{t=1}^\T \exp(\lambda_t)< 1,\\
&\{x=\alpha \exp(\lambda)\mid {\alpha \geq 0}\} && \mbox{ if }\quad     \sum_{t=1}^\T \exp(\lambda_t)= 1.
\end{align*}
If $\sum_t \exp(\lambda_t)> 1$, then the objective value tends to $-\infty$ for $x^{(\alpha)}=\alpha \exp(\lambda)$  as $\alpha \to \infty$.
\end{lemma}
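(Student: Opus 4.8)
The plan is to exploit the fact that the objective is positively homogeneous of degree one, which reduces the problem on the unbounded orthant $\mR_+^\T$ to a minimization over the compact probability simplex. Write $f(x) := \sum_{t=1}^\T x_t \log(x_t/\bar x) - \langle x, \lambda\rangle$ with $\bar x = \ett^T x$. First I would check that $f(0)=0$ (using the convention $0\log 0 = 0$) and that $f(\alpha x) = \alpha f(x)$ for every $\alpha \ge 0$ and $x\in \mR_+^\T$: both the entropy-like term and the linear term scale linearly, since the ratios $x_t/\bar x$ are invariant under $x \mapsto \alpha x$. This homogeneity is the crux, as it forces the infimum to be either $0$ or $-\infty$, with the behaviour dictated entirely by the sign of $f$ along rays.

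Next, for $x \ne 0$ I would factor $x = \bar x\, p$ with $p := x/\bar x$ lying in the simplex $\Delta := \{p \in \mR_+^\T : \ett^T p = 1\}$, obtaining $f(x) = \bar x\, g(p)$ where $g(p) := \sum_t p_t \log p_t - \langle p, \lambda\rangle$. Thus $\inf_{x\neq 0} f(x)$ is governed by $m := \min_{p \in \Delta} g(p)$ (attained since $\Delta$ is compact and $g$ continuous): if $m \ge 0$ the infimum over $x \ne 0$ equals $0$, approached as $\bar x \to 0^+$ and attained along any direction with $g(p)=0$; if $m<0$ then $f(\bar x\, p)=\bar x\, g(p)\to -\infty$ as $\bar x\to\infty$. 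Combined with $f(0)=0$, the overall infimum is $0$ when $m\ge 0$ and $-\infty$ when $m<0$.

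The third step is the standard computation of $m$. The negative-entropy term makes $g$ strictly convex on $\Delta$, and its unique minimizer is the Gibbs distribution $p^*_t = \exp(\lambda_t)/Z$ with $Z := \sum_s \exp(\lambda_s)$, which is strictly positive and hence in the relative interior, so first-order conditions apply cleanly. Substituting yields $m = g(p^*) = -\log Z = -\log\sum_t \exp(\lambda_t)$. Therefore $m \ge 0 \iff Z \le 1$ and $m<0 \iff Z>1$, which already establishes the claimed dichotomy for existence of an optimal solution and identifies the divergent regime.

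Finally I would read off the optimal sets by case. When $Z<1$ we have $m>0$, so $f(x)>0$ for all $x\neq 0$ and the minimum value $0$ is attained only at $x=0$. When $Z=1$ we have $m=0$ with unique simplex minimizer $p^* = \exp(\lambda)$ (as $Z=1$), so by homogeneity $f=0$ exactly on the ray $\{x = \bar x\, p^* = \alpha\exp(\lambda) : \alpha \ge 0\}$ and is strictly positive elsewhere, giving the stated solution set. When $Z>1$, choosing $x^{(\alpha)} = \alpha\exp(\lambda)$ gives $p^* = \exp(\lambda)/Z$ and $f(x^{(\alpha)}) = \alpha Z\, g(p^*) = -\alpha Z\log Z \to -\infty$, confirming the explicit divergent ray. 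The step requiring the most care is the boundary case $Z=1$: one must invoke strict convexity to guarantee that $p^*$ is the only direction in $\Delta$ with $g=0$, and then use homogeneity to conclude that the full optimal set is precisely the ray $\{\alpha\exp(\lambda):\alpha\ge 0\}$, including the endpoint $\alpha=0$.
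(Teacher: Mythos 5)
Your proof is correct and rests on the same underlying decomposition as the paper's: both reduce the objective to $\bar x$ times a nonnegative relative-entropy term (vanishing iff $x\propto\exp(\lambda)$) minus $\bar x\log\beta$, where $\beta=\sum_t e^{\lambda_t}$, and then run the identical case analysis on $\beta<1$, $\beta=1$, $\beta>1$. The only difference is packaging: you make the degree-one homogeneity explicit and invoke the Gibbs variational principle on the compact simplex, whereas the paper obtains the same lower bound $F(x)\ge-\bar x\log\beta$ directly from the pointwise inequality $z\log(z/y)-z+y\ge 0$ with its equality condition $z=y$.
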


\begin{proof} 
Let $\beta:= \sum_{t}\exp(\lambda_t)$, and note that $\beta >0$. Then define $F(x)$
\begin{align*}
           F(x):=&\sum_{t=1}^\T \Big(x_t\log\frac{x_t}{\bar x}\Big)- \langle x, \lambda\rangle = \sum_{t=1}^\T x_t\log\frac{x_t}{\bar x  \exp(\lambda_t)} \\
           =& \sum_{t=1}^\T x_t \log\frac{x_t}{\bar x e^{\lambda_t}/\beta}
\;-\;
\bar x \log\beta.
    \end{align*}
Next we will use the inequality $z \log(z/y) - z + y \geq 0$, where equality holds if and only if $z=y$, to show that the first term is non-negative. Thus we obtain
        \[ 
\sum_{t=1}^\T x_t \log\frac{x_t}{\bar x e^{\lambda_t}/\beta}
\ge
\sum_{t=1}^\T (x_t - \bar x e^{\lambda_t}/\beta)
= \bar x - \frac{\bar x}{\beta}\sum_{t=1}^\T e^{\lambda_t}
= 0, \]
from which it follows that $F(x) \geq - \bar x \log \beta$.
\begin{itemize}
    \item If $\beta<1$, then $F(x)>0$ for all $\bar x >0$, and $F(0)=0$. Hence $x=0$ is the unique minimizer.
    \item If $\beta=1$, then $F(x) \geq0$, with equality if and only if $x_t={\bar x e^{\lambda_t}}$ for all $t$. Therefore all minimizers are in the form $x=\alpha \exp(\lambda)$, for $\alpha \geq 0$.
    \item If $\beta>1$, then $-\bar x \log \beta < 0$, and along $x^{(\alpha)}=\alpha \exp(\lambda)$ we have $F(x^{(\alpha)})= -\alpha \beta \log\beta \rightarrow\ -\infty $ as $\alpha \rightarrow \infty$, so the problem is unbounded from below.
\end{itemize}
This completes the proof.
\end{proof}

We use the result of Lemma~\ref{lemma:dualconstr} to derive the Lagrange dual of \eqref{eq:newformulation}. We begin by expressing the corresponding Lagrangian
%\begin{align*}
\begin{align*}
&\mathcal{L}(M,\lambda,\rho)
= \sum_{t=1}^\T \ccD(M_t \mid \bar M)
   + \sum_{t=1}^\T \lambda_t^T(\mu_t - M_t\ett) \\
&\; + \sum_{t=1}^\T \rho_t^T(\nu_t - M_t^T\ett) = \sum_{t=1}^\T (\lambda_t^T \mu_t + \rho_t^T \nu_t) \\
&\; + \sum_{i,j,t}\Big( M_t(i,j)\log\frac{M_t(i,j)}{\bar M(i,j)} + M_t(i,j)\bigl(\lambda_t(i)+\rho_t(j)\big)\Big),
\end{align*}
where $\lambda_t, \rho_t \in \mR^n$, for $t=1,\ldots,\T$, are the Lagrange multipliers.  Note that for a given element $(i,j)$, the problem of finding the minimizing $M_t(i,j)$, for $t=1,\ldots, \T$, is of the form considered in \Cref{lemma:dualconstr}. 
Therefore, the Lagrangian is bounded from below only if 
$\sum_{t} \exp\left( \lambda_t(i) + \rho_t(j) \right) \leq 1$, and the corresponding minimizers $M_t(i,j)$ are given by
\begin{equation}
\label{eq:optimalM}
    M^*_t(i,j)=\bar M^* (i,j)\exp(\lambda_t(i)+\rho_t(j)).
\end{equation}
In addition 
$M^*_t(i,j)=\bar M^* (i,j)=0$ for all $t$ if $\sum_t\exp(\lambda_t(i)+\rho_t(j))<1$.
%%%%%%%%%
The dual problem then becomes
\begin{subequations}
    \begin{align}
\label{eq:dual}
    \max_{\substack{\lambda_t, \rho_t\in \mR^n\\ t\in [\T]}} \;\; &\sum_{t=1}^\T \lambda_t^T \mu_t + \rho_t^T \nu_t \\
    \suto\;   & \sum_{t=1}^\T \exp\left( \lambda_t(i) + \rho_t(j) \right) \leq 1, \; \forall i,j\in [n]. \label{eq:dualconstraint}
\end{align}
\end{subequations}

\subsection{Uniqueness}
We now characterize the structure of the optimal solution set of \eqref{eq:newformulation} and derive a necessary and sufficient condition for uniqueness. 

\begin{proposition}[Characterization of uniqueness]
\label{prop:uniqueness}
Let ($(M_t^*)_{t=1}^\T, \bar M^*$) be an optimal solution of~\eqref{eq:newformulation}, and let
 $u_t:=\exp(\lambda_t)$, $v_t:=\exp(\rho_t)$ be the associated dual scalings, where $(\lambda_t,\rho_t)_{t=1}^\T$ are optimal dual variables.  Denote with $\mathcal{I}=\{(i,j)\mid \sum_tu_t(i)v_t(j)<1\}$ the index pairs for which the dual constraints are inactive.
Then, the optimal solution is the unique solution if and only if 
the only solution $ X\in\mathbb{R}^{n\times n}$ to the system of equations 
\begin{subequations}\label{eq:setX}
\begin{align}
&(X\odot(u_t v_t^T))\,\ett = 0,\quad \: \:  t\in [\T], \label{eq:feasibilityX-a}\\
& (X\odot(u_t v_t^T))^T \ett = 0,\quad t \in [\T], \label{eq:feasibilityX-b}  \\
& X(i,j)=0 \text{ for } (i,j)\in \mathcal{I}, \label{eq:feasibilityX-c} \\
& X(i,j)\ge 0 \text{ for all } (i,j) \text{ where } \bar M^*(i,j)=0, \label{eq:feasibilityX-d}
\end{align}
\end{subequations}
is the trivial solution $X=0$.
 \end{proposition}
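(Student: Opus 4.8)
The plan is to characterize the \emph{entire} optimal solution set of \eqref{eq:newformulation} explicitly and then read off the uniqueness condition as the triviality of the associated set of admissible perturbations. First I would invoke convexity together with strong duality: since \eqref{eq:newformulation} is a convex program that is feasible with $\bar M>0$ by \Cref{lemma:feasiblity}, the optimal dual variables $(\lambda_t,\rho_t)_{t=1}^\T$ exist, and for any optimal primal solution the resulting triple is a saddle point of the Lagrangian $\mathcal{L}(M,\lambda,\rho)$. Hence every optimal $M$ minimizes $\mathcal{L}(\cdot,\lambda,\rho)$ for the fixed optimal duals. The key observation is that, because $\bar M(i,j)=\sum_t M_t(i,j)$, this Lagrangian decouples over the index pairs $(i,j)$ into $n^2$ independent scalar problems, each of exactly the form treated in \Cref{lemma:dualconstr} with the substitution $\lambda\leftrightarrow(\lambda_t(i)+\rho_t(j))_t$.

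Next I would apply \Cref{lemma:dualconstr} cell by cell. For $(i,j)\in\mathcal{I}$ the dual constraint is strict, $\sum_t u_t(i)v_t(j)<1$, so the scalar minimizer is unique and equal to $0$; thus $M_t(i,j)=0$ for every $t$ in any optimal solution, and in particular $\bar M^*(i,j)=0$. For active pairs $(i,j)\notin\mathcal{I}$ the constraint holds with equality, and the lemma gives that the minimizers form the ray $\{(\alpha(i,j)\,u_t(i)v_t(j))_t : \alpha(i,j)\ge 0\}$; summing over $t$ and using $\sum_t u_t(i)v_t(j)=1$ yields $\alpha(i,j)=\bar M(i,j)$, so that $M_t(i,j)=\bar M(i,j)\,u_t(i)v_t(j)$, recovering \eqref{eq:optimalM}. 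Consequently the optimal set of \eqref{eq:newformulation} is exactly the collection of $(M_t)_t$ of the form $M_t=\bar M\odot(u_tv_t^T)$, where $\bar M\ge 0$ is supported on the active pairs, satisfies $\bar M(i,j)=0$ for $(i,j)\in\mathcal{I}$, and obeys the marginal constraints. Since $u_t,v_t>0$, two optimal solutions coincide if and only if their aggregates $\bar M$ coincide, so it suffices to determine when $\bar M^*$ is the unique admissible aggregate.

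Finally I would set $X:=\bar M-\bar M^*$ for an arbitrary optimal aggregate $\bar M$ and verify that $X$ ranges precisely over the solution set of \eqref{eq:setX}. The row and column marginal constraints $M_t\ett=\mu_t$ and $M_t^T\ett=\nu_t$, being identical for $\bar M$ and $\bar M^*$, translate into $(X\odot(u_tv_t^T))\ett=0$ and $(X\odot(u_tv_t^T))^T\ett=0$, i.e.\ \eqref{eq:feasibilityX-a}--\eqref{eq:feasibilityX-b}; the support condition gives $X(i,j)=0$ on $\mathcal{I}$, i.e.\ \eqref{eq:feasibilityX-c}; and nonnegativity of $\bar M$ at pairs where $\bar M^*(i,j)=0$ forces $X(i,j)\ge 0$ there, i.e.\ \eqref{eq:feasibilityX-d}. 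For the two directions: if $\bar M^*$ is not unique, the difference of two optimal aggregates produces a nonzero $X$ solving \eqref{eq:setX}; conversely, given any nonzero $X$ solving \eqref{eq:setX}, the aggregate $\bar M^*+\theta X$ remains admissible (in particular nonnegative) for all sufficiently small $\theta>0$ by the sign condition \eqref{eq:feasibilityX-d}, and hence yields an optimal solution distinct from $M^*$. This establishes the claimed equivalence.

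The main obstacle I anticipate is the careful boundary bookkeeping. Two points require attention: justifying that every optimal primal shares the minimization of $\mathcal{L}$ with the single fixed dual pair (the saddle-point step), and handling the active pairs with $\bar M^*(i,j)=0$, where \Cref{lemma:dualconstr} only provides a one-sided ray of minimizers and the perturbation $\bar M^*+\theta X$ is admissible only for $\theta>0$. This one-sidedness is exactly what produces the inequality constraint \eqref{eq:feasibilityX-d} rather than an equality, and getting its direction right is the delicate part of the argument.
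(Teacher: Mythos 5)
Your proposal is correct and follows essentially the same route as the paper's proof: minimize the Lagrangian at the fixed optimal duals, decouple it cellwise into instances of \Cref{lemma:dualconstr}, conclude $M_t^*(i,j)=0$ on $\mathcal{I}$ and the ray structure $M_t(i,j)=\bar M(i,j)u_t(i)v_t(j)$ on active pairs, and parametrize admissible perturbations by $X$ subject to \eqref{eq:setX}. If anything, your write-up is more explicit than the paper's on the saddle-point justification, on why uniqueness reduces to uniqueness of the aggregate $\bar M$, and on the converse direction via $\bar M^*+\theta X$ for small $\theta>0$, which cleanly explains the one-sided constraint \eqref{eq:feasibilityX-d}.
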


\begin{proof}
Exploiting the convexity of the optimal solution set, we consider optimal solutions of the form $M_t^*+\delta M_t$ to give conditions for uniqueness. First, note that any solution of the original problem \eqref{eq:newformulation} is also a minimizer of the corresponding Lagrangian with $\lambda$ and $\rho$ optimal. Fix an index pair $(i,j)$, and denote $m_t:=M_t(i,j)$, $\bar m=\sum_t m_t$. 
The $(i,j)$-th component of the Lagrangian of \eqref{eq:newformulation} reads
\[
\mathcal{L}_{ij}(m)
= \sum_{t=1}^\T \!\big[m_t(\log(m_t/\bar m)\big]
- \sum_{t=1}^\T (\lambda_t(i)+\rho_t(j))\,m_t.
\]
Minimizing $\mathcal{L}_{ij}$ with respect to $m$ yields the same problem as in Lemma~\ref{lemma:dualconstr}, 
with $\lambda_t$ replaced by $\lambda_t(i)+\rho_t(j)$.

Hence, for the entries $(i,j)\in \mathcal{I}$ corresponding to an inactive dual constraints, the set of minimizers is $\{0\}$. Therefore, $M^*_t(i,j)=0$ is the unique solution for $(i,j)\in \mathcal{I}$.  

Consider now the entries $(i,j)\notin \mathcal{I}$, corresponding to active dual constraints, i.e., $(i,j)$ for which $\sum_tu_t(i)v_t(j)=1$. Then, the entire ray
\[
\big\{\alpha (u_t(i)v_t(j))_{t=1}^\T : \alpha\ge0\big\}
\]
gives cost-equivalent primal values. 
Collecting all entries together, an infinitesimal perturbation that preserves optimality in every active entry can then be written as
\[
\delta M_t = X \odot (u_t v_t^T), \qquad t \in [\T],
\]
for a matrix $X\in\mR^{n\times n}$ with $X_{ij}=0$ if $(i,j)\in \mathcal{I}$.
Feasibility with respect to the marginal constraints 
$M_t\ett=\mu_t$ and $M_t^\T \ett=\nu_t$
imposes
\[
(\delta M_t)\ett=0,
\qquad
(\delta M_t)^\T\ett=0,
\qquad  t \in [\T].
\]
Thus, we obtain the linear feasibility system
\begin{equation*}
%\label{eq:feasibilityX}
\begin{aligned}
(X\odot (u_t v_t^T))\ett &= 0, &  t \in [\T], \\
(X\odot (u_t v_t^T))^T\ett &= 0, &  t \in [\T]. 
\end{aligned}
\end{equation*}
Finally, the positivity constraint $M \geq 0$ imposes that, if $\bar M^*(i,j)=0$, it must also hold  $X(i,j) \geq 0$.
\end{proof}

The condition that $X=0$ is the only solution to \eqref{eq:setX} can be expressed directly in terms of the dual scaling vectors.

\begin{corollary}[Dual characterization]
\label{cor:uniq-dual}
Let $(u_t,v_t)_{t=1}^\T$ be the dual scaling vectors at the optimum, with $u_t=\exp (\lambda_t)$ and $v_t=\exp(\rho_t)$. 
Then a sufficient condition for a solution $M^*$ of problem~\eqref{eq:newformulation} to be unique is that 
\[
\Span\{v_t\}_{t=1}^\T=\mR^n
\quad\text{or}\quad
\Span\{u_t\}_{t=1}^\T=\mR^n .
\]
Furthermore, if $\bar M^*>0$, the condition is also necessary.
\end{corollary}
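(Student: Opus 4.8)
The plan is to invoke the characterization in \Cref{prop:uniqueness}, which reduces uniqueness to showing that $X=0$ is the only solution of the system~\eqref{eq:setX}. The first step is to decode the two marginal equations \eqref{eq:feasibilityX-a}--\eqref{eq:feasibilityX-b} into plain linear conditions on $X$. Since $u_t=\exp(\lambda_t)$ and $v_t=\exp(\rho_t)$ have strictly positive entries, the $i$-th row sum of $X\odot(u_tv_t^T)$ equals $u_t(i)\sum_j X(i,j)v_t(j)=u_t(i)\,(Xv_t)(i)$; setting it to zero and dividing by $u_t(i)>0$ gives $Xv_t=0$ for every $t\in[\T]$. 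The analogous computation on the column sums in \eqref{eq:feasibilityX-b} yields $u_t^TX=0$ (equivalently $X^Tu_t=0$) for every $t$. Hence \eqref{eq:setX} is equivalent to the two homogeneous conditions $Xv_t=0$ and $u_t^TX=0$ for all $t$, together with the support constraints \eqref{eq:feasibilityX-c}--\eqref{eq:feasibilityX-d}.

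For sufficiency, suppose $\Span\{v_t\}_{t=1}^\T=\mR^n$. Then $Xv_t=0$ for all $t$ forces $X$ to vanish on a spanning set of $\mR^n$, so $X=0$, and the remaining constraints hold trivially; by \Cref{prop:uniqueness} the solution is unique. Symmetrically, if $\Span\{u_t\}_{t=1}^\T=\mR^n$, the relations $u_t^TX=0$ force $X=0$. I note that this direction does not use the hypothesis $\bar M^*>0$.

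For necessity, assume $\bar M^*>0$. The first observation is that this makes the support constraints vacuous: from \eqref{eq:optimalM}, $(i,j)\in\mathcal{I}$ implies $\bar M^*(i,j)=0$, so $\bar M^*>0$ gives $\mathcal{I}=\emptyset$, eliminating \eqref{eq:feasibilityX-c}; and since no entry of $\bar M^*$ vanishes, \eqref{eq:feasibilityX-d} imposes nothing. The system therefore reduces to exactly $Xv_t=0$ and $u_t^TX=0$ for all $t$. I then prove the contrapositive: if both $\Span\{v_t\}_{t=1}^\T\subsetneq\mR^n$ and $\Span\{u_t\}_{t=1}^\T\subsetneq\mR^n$, choose a nonzero $p\perp\Span\{u_t\}$ and a nonzero $q\perp\Span\{v_t\}$ and set $X=pq^T$. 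This $X$ is nonzero, while $Xv_t=p\,(q^Tv_t)=0$ and $u_t^TX=(u_t^Tp)\,q^T=0$ for all $t$, exhibiting a nontrivial solution and hence nonuniqueness.

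The one step requiring care is the necessity direction, where one must confirm that $\bar M^*>0$ genuinely removes constraints \eqref{eq:feasibilityX-c}--\eqref{eq:feasibilityX-d}, so that the rank-one perturbation $X=pq^T$ is not blocked by the support or sign conditions; this is exactly why the equivalence, rather than mere sufficiency, needs the hypothesis $\bar M^*>0$. The rank-one construction is the crux of the argument, but it becomes immediate once the system has been reduced to the two homogeneous linear conditions $Xv_t=0$ and $u_t^TX=0$.
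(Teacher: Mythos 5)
Your proof is correct and follows essentially the same route as the paper's: sufficiency by noting that $Xv_t=0$ (resp.\ $u_t^TX=0$) for a spanning family forces $X=0$, and necessity via the rank-one perturbation $X=pq^T$ with $p,q$ orthogonal to the respective spans, which is exactly the paper's $X(i,j)=a_ib_j$ construction. Your explicit reduction of \eqref{eq:feasibilityX-a}--\eqref{eq:feasibilityX-b} to $Xv_t=0$, $u_t^TX=0$ and the observation that $\bar M^*>0$ forces $\mathcal{I}=\emptyset$ are slightly more detailed than the paper's wording but not a different argument.
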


\begin{proof}
To prove sufficiency, note that if either $\{v_t\}_t$ or $\{u_t\}_t$ spans $\mR^n$, then from
\eqref{eq:feasibilityX-a} or \eqref{eq:feasibilityX-b} it follows that 
$X(i,:)=0$ or $X(:,j)=0$ for all $i,j$, hence $X=0$, proving uniqueness.
To prove necessity, assume that $\bar M^*(i,j)>0$ for all $(i,j)$.  Now, assume that both families have deficient span, i.e. there exist nonzero vectors 
\[
a \in \Span\{u_t\}_{t=1}^\T{}^\perp,
\qquad
b \in \Span\{v_t\}_{t=1}^\T{}^\perp.
\]
Define $X\in\mR^{n\times n}$ by $X(i,j)=a_i b_j$. 
For each $t$,
\begin{equation*}
    \begin{split}
        \big[(X \odot (u_t v_t^T))\ett\big]_i
= u_t(i)\,a_i\,\langle b, v_t\rangle = 0,
\\
\big[(X \odot (u_t v_t^T))^T\ett\big]_j
= v_t(j)\,b_j\,\langle a, u_t\rangle = 0,
    \end{split}
\end{equation*}
since $a\perp u_t$ and $b\perp v_t$ for all $t$. 
Thus $X\neq0$ satisfies \eqref{eq:feasibilityX-a} and \eqref{eq:feasibilityX-b}. Under the positivity assumption $\bar M^* > 0$, also \eqref{eq:feasibilityX-c} and  \eqref{eq:feasibilityX-d} are satisfied and we conclude that the solution is unique.
\end{proof}

The dual characterization provides a compact and practically verifiable condition, with uniqueness holding if the dual scaling vectors $u_t$ or $v_t$ span $\mathbb{R}^n$. We note that for this condition to be satisfied it must hold $\T \geq n$. 
The following result presents a primal interpretation of \Cref{cor:uniq-dual}.
\begin{corollary}[Primal characterization]
\label{cor:uniq-primal}
Let $M^*$ an optimal solution to problem ~\eqref{eq:newformulation}, and assume that $\bar M^* >0$. Then $M^*$ is the unique optimal solution if and only if,  for every $i$ (equivalently, for every $j$),
\[
\Span\{M_t^*(i,:)\}_{t=1}^\T = \mR^n
\quad\text{or}\quad
\Span\{M_t^*(:,j)\}_{t=1}^\T = \mR^n,
\]
i.e., each of the rows (or the columns) of $(M^*_t)_{t=1}^\T$ spans the whole space. 
\end{corollary}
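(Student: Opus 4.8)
The plan is to reduce the stated primal spanning condition to the dual spanning condition of \Cref{cor:uniq-dual}, which is already known to be necessary and sufficient for uniqueness when $\bar M^*>0$. The bridge between the two is the explicit optimal representation \eqref{eq:optimalM}. First I would observe that the hypothesis $\bar M^*>0$ forces every dual constraint to be active: by the argument in the proof of \Cref{prop:uniqueness}, an inactive index $(i,j)\in\mathcal{I}$ has $M_t^*(i,j)=0$ for all $t$ and hence $\bar M^*(i,j)=0$, so $\bar M^*>0$ gives $\mathcal{I}=\emptyset$. Consequently \eqref{eq:optimalM} holds at every entry, i.e. $M_t^*(i,j)=\bar M^*(i,j)\,u_t(i)\,v_t(j)$, where moreover $u_t=\exp(\lambda_t)$ and $v_t=\exp(\rho_t)$ are \emph{strictly} positive entrywise since the exponential is positive.

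Next I would fix a row index $i$ and rewrite the $\T$ row vectors compactly. Writing $D_i:=\diag(\bar M^*(i,:))$, which is an invertible diagonal matrix precisely because $\bar M^*(i,:)>0$, the representation gives $M_t^*(i,:)=u_t(i)\,D_i v_t$ for each $t$. Since $u_t(i)>0$ is a nonzero scalar and $D_i$ is invertible, rescaling preserves linear span, so $\Span\{M_t^*(i,:)\}_{t=1}^\T=D_i\,\Span\{v_t\}_{t=1}^\T$. As $D_i$ is a bijection of $\mR^n$, this equals $\mR^n$ if and only if $\Span\{v_t\}_{t=1}^\T=\mR^n$. The key point is that the right-hand condition does not depend on $i$: the row family spans for one $i$ exactly when it spans for all $i$, which justifies the ``for every $i$'' phrasing in the statement. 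A symmetric computation with $\tilde D_j:=\diag(\bar M^*(:,j))$ shows that $\Span\{M_t^*(:,j)\}_{t=1}^\T=\mR^n$ for every $j$ if and only if $\Span\{u_t\}_{t=1}^\T=\mR^n$.

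Combining the two equivalences, the primal condition ``the rows span for every $i$, or the columns span for every $j$'' is equivalent to ``$\Span\{v_t\}_{t=1}^\T=\mR^n$ or $\Span\{u_t\}_{t=1}^\T=\mR^n$''. By \Cref{cor:uniq-dual}, under $\bar M^*>0$ the latter is both necessary and sufficient for $M^*$ to be the unique optimal solution, which completes the argument.

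Finally, I expect the only delicate point to be the careful use of the hypothesis $\bar M^*>0$, which is needed twice: once to guarantee $\mathcal{I}=\emptyset$ so that the clean product representation \eqref{eq:optimalM} holds at \emph{every} entry with no vanishing factors, and once to ensure the diagonal scalings $D_i,\tilde D_j$ are invertible so that the span is preserved. Without positivity these rescalings could collapse dimensions and the primal and dual spanning conditions would no longer match, which is exactly why the corollary is stated under this assumption; the remaining steps are routine linear algebra.
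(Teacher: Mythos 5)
Your proposal is correct and follows essentially the same route as the paper: use the optimality relation \eqref{eq:optimalM} to write each row $M_t^*(i,:)$ as a positive scalar multiple of $\bar M^*(i,:)\odot v_t^T$, note that diagonal scaling by a strictly positive vector preserves span, and reduce to \Cref{cor:uniq-dual} (symmetrically for columns). Your added observations --- that $\bar M^*>0$ forces $\mathcal{I}=\emptyset$ so the product representation holds at every entry, and that the spanning condition is independent of the choice of $i$ --- are correct refinements of the same argument.
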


\begin{proof}
By optimality condition $\eqref{eq:optimalM}$, $M_t^*(i,j) = \bar M^*(i,j)\,u_t(i)\,v_t(j).$
Fix $i$. For each $t$,
\[
M_t^*(i,:)
= u_t(i)\,\big(\bar M^*(i,:)\odot v_t^T\big),
\]
so all the row vectors $\{M_t^*(i,:)\}_t$ are obtained from $\{v_t\}_t$ by diagonal scaling with the fixed positive vector $\bar M^*(i,:)$.
Since scaling by a fixed diagonal matrix with strictly positive entries preserves span, we get
\[
\Span\{M_t^*(i,:)\}_{t=1}^\T = \Span\{v_t\}_{t=1}^\T.
\]
A symmetric argument with the columns gives
\[
\Span\{M_t^*(:,j)\}_{t=1}^\T = \Span\{u_t\}_{t=1}^\T,
\]
and the uniqueness result follows from \Cref{cor:uniq-dual}.
\end{proof}

\begin{remark}
Linear independence of the dual scalings $(u_t,v_t)$ reflects the marginals $(\mu_t,\nu_t)$. 
If two observations are proportional, $(\mu_{t_1},\nu_{t_1})=c\,(\mu_{t_2},\nu_{t_2})$, then $u_{t_1}\propto u_{t_2}$ and $v_{t_1}\propto v_{t_2}$, so the second observation adds no information. 
The precise link between the rank of $(u_t,v_t)$ and that of $(\mu_t,\nu_t)$ remains an open question.
\end{remark}

\section{Numerical Methods}
\label{sec:nm}
The main idea to solve the optimization problem \eqref{eq:newformulation} is to iteratively update the transport plans and transition probability matrix. 
The updating of the transport plans consists of computing the solution of
$\T$ entropy regularized optimal transport problems, which can be solved efficiently (and in parallel) with Sinkhorn iterations \cite{cuturi2013sinkhorn,peyre2019computational}. The prior estimate is then updated as the sum of the optimal mass transport matrices, and the procedure iterated. 
This is detailed in Algorithm~\ref{alg:entropy}.

\begin{algorithm}
\caption{Proximal Iterative Scheme for \eqref{eq:newformulation}}\label{alg:entropy}
\begin{algorithmic}
\State $M_t \gets \mu_t \nu_t^T, \ X \gets \sum_{t=1}^\T M_t$
    \While{change in $X$ above tolerance} 
        \State $M_t \gets \argmin_{M_t \in \mR^{n \times n}_+} \  \mathcal{D}(M_t \mid X)$
\State \hspace{3em} $\suto \;\;  \, M_t \ett = \mu_t,\; M_t^T\ett = \nu_t,\; \ t \in [\T]$
        \State $X \gets \sum_{t=1}^\T M_t$
\EndWhile
  \State $A \gets \diag{(X\mathbf{1})}^{-1}\, X$
\end{algorithmic}
\end{algorithm}

A way to view this algorithm, and to show its convergence, is to consider it as an instance of the entropic proximal method \cite{teboulle1992entropic}, with Kullback-Leibler divergence~\eqref{eq:kl} as the penalization. 
The method allows to minimize a closed, convex, and proper function  \( f : \mathbb{R}^m \rightarrow (-\infty, +\infty] \), by generating the sequence \( \{x^k\} \), starting from an initial strictly positive point \( x^0 > 0 \), according to the update rule
\begin{equation}
\label{eq:proximal}
    x^k = \arg\min_{x \in \mR^m_+} \left\{ f(x) + \epsilon_k \, \ccD(x |x^{k-1}) \right\},
\end{equation}
for a sequence of positive parameters \( \{\epsilon_k\} \). The next proposition shows convergence of \Cref{alg:entropy}, and its connection to the proximal iterations \eqref{eq:proximal}.

\begin{proposition}
    \Cref{alg:entropy} converges to  an optimal solution $\bar M^*$ of \eqref{eq:newformulation}.
    \end{proposition}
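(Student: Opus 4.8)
The plan is to recognize Algorithm~\ref{alg:entropy} as an instance of the entropic proximal method~\eqref{eq:proximal} applied to~\eqref{eq:newformulation}, and then to invoke the convergence theory of~\cite{teboulle1992entropic}. Throughout, let $\mathcal{C}$ denote the convex marginal constraint set $\{(M_t)_{t=1}^\T : M_t\ge 0,\ M_t\ett=\mu_t,\ M_t^T\ett=\nu_t\}$, write $\bar M=\sum_{t=1}^\T M_t$, and let $F(M):=\sum_{t=1}^\T \ccD(M_t\mid\bar M)$ denote the objective of~\eqref{eq:newformulation}.

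The central step is an exact algebraic identity relating the surrogate minimized in each inner iteration to the true objective. For any $(M_t)_{t=1}^\T$ and any strictly positive $X$, using $\sum_t M_t=\bar M$ one obtains
\[
\sum_{t=1}^\T \ccD(M_t\mid X)=\sum_{t=1}^\T \ccD(M_t\mid\bar M)+\ccD(\bar M\mid X)=F(M)+\ccD(\bar M\mid X).
\]
In the algorithm $\bar M$ and $X^{k-1}$ always share the total mass $\sum_t \mu_t^T\ett$, so $\ccD(\bar M\mid X^{k-1})\ge 0$ is a genuine Bregman proximal term. Consequently the inner update $M^k=\argmin_{M\in\mathcal{C}}\sum_t\ccD(M_t\mid X^{k-1})$ coincides with the regularized step $M^k=\argmin_{M\in\mathcal{C}}\{F(M)+\ccD(\bar M\mid X^{k-1})\}$, and the aggregate update $X^k=\sum_t M_t^k$ equals $\argmin_{\bar M}\{\phi(\bar M)+\ccD(\bar M\mid X^{k-1})\}$, where $\phi(\bar M):=\min\{F(M): M\in\mathcal{C},\ \sum_t M_t=\bar M\}$, set to $+\infty$ when no feasible decomposition exists. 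This is precisely the proximal iteration~\eqref{eq:proximal} for minimizing $\phi$ with KL penalty and constant step $\epsilon_k=1$.

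Next I would verify the hypotheses required by~\cite{teboulle1992entropic}, namely that $\phi$ is closed, proper, and convex. Convexity follows because $(M,\bar M)\mapsto\sum_t\ccD(M_t\mid\bar M)$ is jointly convex (joint convexity of the KL divergence) and $\phi$ is its partial minimization over the convex set $\mathcal{C}$ intersected with the linear constraint $\sum_t M_t=\bar M$; properness follows from \Cref{lemma:feasiblity}, which exhibits a feasible point; closedness and finiteness follow from the compactness of the feasible set and continuity of the objective established in the proof of \Cref{prop:existence}. I would also record that the method is well initialized: the starting point $X^0=\sum_t\mu_t\nu_t^T$ is strictly positive by \Cref{lemma:feasiblity} under \Cref{as:nonzero}, and each inner subproblem is a strictly convex entropy-regularized transport problem (solved by Sinkhorn) with a unique positive solution, so all iterates remain in the positive orthant where the penalty $\ccD(\cdot\mid\cdot)$ is finite and differentiable.

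With these facts in place, the conclusion follows from the convergence analysis of the entropic proximal method in~\cite{teboulle1992entropic}: for a closed proper convex $\phi$ with nonempty minimizer set (guaranteed by \Cref{prop:existence}) and a constant positive step size, the sequence $\{X^k\}$ converges to a minimizer $\bar M^*$ of $\phi$, i.e. to an optimal solution of~\eqref{eq:newformulation}, with the corresponding $M^k$ converging to an optimal $(M_t^*)$. I expect the main obstacle to be the identification step rather than the invocation of the external theorem: one must confirm that the identity above is \emph{exact} and that replacing the penalty $\ccD(\bar M\mid X^{k-1})$—which depends on $M$ only through the aggregate—by a genuine proximal term on the variable $\bar M$ via the value function $\phi$ is legitimate, including checking that the partial minimization defining $\phi$ preserves convexity and lower semicontinuity so that~\cite{teboulle1992entropic} applies.
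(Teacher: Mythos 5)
Your proposal is correct and follows essentially the same route as the paper: both identify the aggregate $X=\sum_t M_t$ as the proximal variable, define the value function ($\phi$ in your notation, $f$ in the paper's), use the exact identity $\sum_t\ccD(M_t\mid X)+\ccD(X\mid X^{k-1})=\sum_t\ccD(M_t\mid X^{k-1})$ to match the algorithm's inner step with the entropic proximal update at $\epsilon_k\equiv 1$, and invoke Teboulle's convergence theorem after checking that the value function is closed, proper, convex with a strictly positive feasible point (\Cref{lemma:feasiblity}) and a nonempty minimizer set (\Cref{prop:existence}). The only cosmetic difference is that you derive the identity by splitting the surrogate into objective plus Bregman term while the paper collapses the proximal objective directly, and the paper cites the quantitative convergence statement from Teboulle's 1997 paper rather than the 1992 one.
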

\begin{proof}
Consider the function
\begin{equation}
\begin{aligned}
\label{eq:minf}
 f(X) = \inf_{\substack{ M_t\in \mR^{n\times n}_+\\ t \in [\T]}}\quad & \ \sum_{t=1}^{\T} \ccD( M_t \,|\,X) \\
\suto \;  &  M_t\ett =\mu_t & \mbox{for }  t\in [\T] \\
  &M_t^T\ett=\nu_t & \mbox{for }  t\in [\T] \\
  & X= \sum_{t=1}^\T M_t.
\end{aligned}
\end{equation}
The function \( f \) corresponds to problem \eqref{eq:newformulation}, with the prior $X$ being considered as the variable.
First, observe that $f$ is closed, convex and proper.
Under \Cref{as:nonzero}, we have seen (\Cref{lemma:feasiblity}) that there exists a strictly positive feasible point, so that, $\dom f \cap \mR^{n \times n}_{>0} \neq \emptyset $. In addition, \Cref{prop:existence} guarantees that the optimal solution set is non-empty, thus $\{X : f(X)=\inf_{X \in \mR^{n \times n}_{+}} f \}  \neq \emptyset  $. With the parameter choice  $\epsilon_k\equiv1$, the entropic proximal point algorithm converges according to \cite[Theorem~4.3]{teboulle1997convergence}.
The iterations of \eqref{eq:proximal}, with $f$ defined as \eqref{eq:minf}, become
%\end{equation*}
\begin{equation*}
\begin{aligned}
\argmin_{X  \in \mR_+^{n \times n}}\;\inf_{\substack{ M_t\in \mR^{n\times n}_+\\ t \in [\T]}}\;
   &\sum_{t=1}^{\T} \ccD(M_t \,|\, X) + \ccD(X \,|\, X^k)\\[3pt]
\suto\;
   & M_t\ett = \mu_t, \quad M_t^T\ett = \nu_t, \; \mbox{ for } t\in[\T]\\
   & X = \sum_{t=1}^{\T} M_t,
\end{aligned}
\end{equation*}
whose objective function can be rewritten as $\sum_{t=1}^{\T} \ccD( M_t \,|\,X^k)$. 
Thus, the variable $X$ is not part of the objective function, and will be determined by the optimal mass transport matrices $M_t$ via the constraint $X=\sum_t M_t$. The updates become
\begin{equation*}
\begin{aligned}
    M^* &\gets 
    \begin{cases}
        \displaystyle \argmin_{M_t \in \mR^{n \times n}_+} \sum_{t=1}^\T \mathcal{D}(M_t \,|\, X^k), \\[6pt]
        \quad \text{s.t. } M_t \mathbf{1} = \mu_t,\; M_t^T \mathbf{1} = \nu_t,\; t \in [\T],
    \end{cases} \\[3pt]
    X^* &\gets \sum_{t=1}^\T M_t^*,
\end{aligned}
\end{equation*}
which correspond to \Cref{alg:entropy}.
\end{proof}
\begin{remark}
    As it is stated, \Cref{alg:entropy} requires to solve, at each iteration, $\T$ bi-marginal entropy regularized OT problems. This computational cost can be greatly alleviated by performing only few sweeps of the Sinkhorn iterations in the inner loop,  often just one. In particular, if each minimization in \eqref{eq:proximal} is performed with an accuracy $\eta_k$, and  $\sum_{k=1}^\infty \epsilon_k \eta_k < \infty$, then the inexact iterations converge to an optimal solution \cite[Theorem~4.3]{teboulle1997convergence}.  
     Recent works have specialized this result for optimal transport and some of its related formulations \cite{yang2022bregman, chen2024inexact, xie2020fast}, giving sufficient descent conditions for convergence.
     \end{remark}

\section{Applications}
\label{sec:application}

A typical application of this problem is when the marginal distributions are derived from independent Markov chain trajectories. In particular, if $\{X^i_t\}_{i=1}^N$ are $N$ independent trajectories of a Markov chain with $n$ states $\{1,\ldots,n\}$ and Markov transition matrix $A$, define 
\begin{equation}
\label{eq:trajectory}
    \mu_t(k) = \frac{1}{N}\sum_{i=1}^N \ett_{X^i_t=k},\quad \text{for}\; k \in [n],
\end{equation}
and $\nu_t = \mu_{t+1}$. For finite, but large $N$, we have 
\begin{equation} \label{eq:noise}
    \nu_t = A^T \mu_t + \text{``noise"},
\end{equation}
where the ``noise" part becomes zero when $N\to \infty$.
In this section, we numerically investigate the estimation of the Markov transition matrix $A$ from such measurements.  
First, in Section~\ref{sec:indep-obs}, we consider independent measurements, where multiple Markov transitions of length 2 are considered. Then, in Section~\ref{sec:seq-obs}, we consider sequential observations where each pair is on the form $(\mu_t, \mu_{t+1})$ for $t\in [\T]$. Finally, in Section~\ref{sec:discussion} we discuss the identifiability issue in connection with the excitation of the states.

\subsection{Independent observations}\label{sec:indep-obs}
To illustrate \Cref{alg:entropy}, we consider the matrix $A$, arising from the estimation from flow cytometry data in \cite{swaminathan2014identification},
\begin{equation} \label{eq:fixedA}
  A =
\begin{bmatrix}
0.48 & 0.50 & 0 & 0.02 & 0 \\
0.33 & 0.27 & 0 & 0.40 & 0 \\
0 & 0 & 0 & 0.54 & 0.46 \\
0.26 & 0 & 0.45 & 0.29 & 0 \\
0 & 0 & 0.51 & 0 & 0.49
\end{bmatrix}.  
\end{equation}
We simulate a scenario where the Markov chain \(A\) produces \(\T\) marginal pairs \((\mu_t, \nu_t)\), corresponding to two consecutive trajectory observations as in \eqref{eq:trajectory}.
In particular, for each $t$, we  sample
 $\mu_t$ from a random uniform distribution, and then $\nu_t$ is constructed according to \eqref{eq:trajectory}.  
 Thus, each pair represents a distinct realizations of the same process with independent initial condition, ensuring excitation of all the states of the system.
Increasing finite values of \(N\) are used to model varying noise levels: as $N\rightarrow \infty$ we reach the transition $\nu=A^T \mu$. 
\begin{figure}
    \centering
    \includegraphics[width=\linewidth]{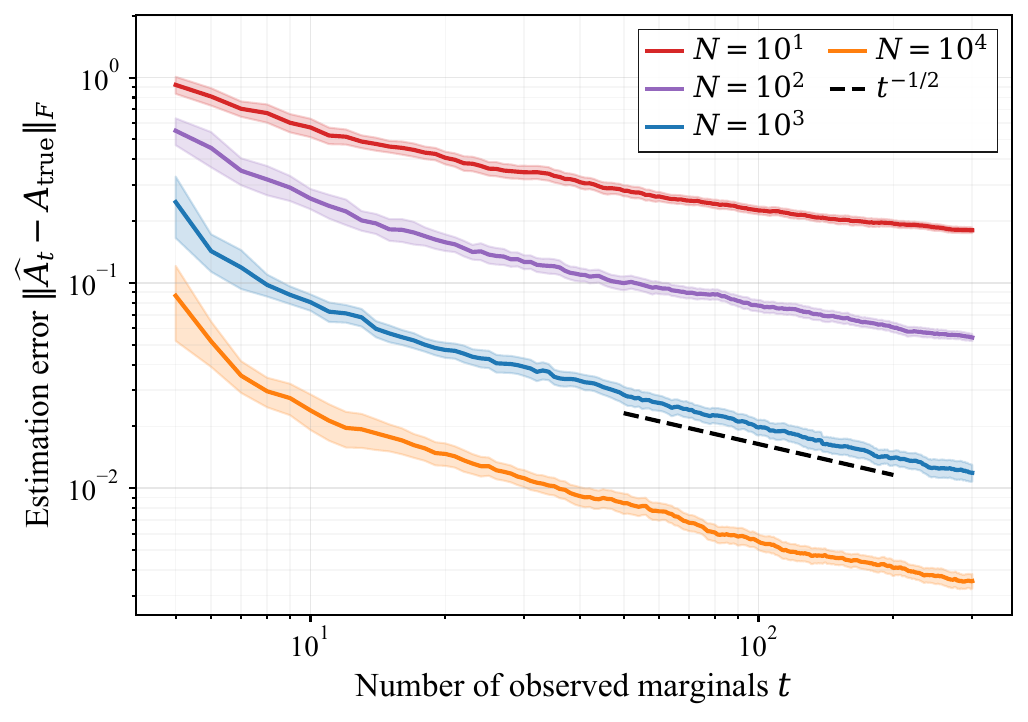}
    \caption{Numerical results for the application of Algorithm~\ref{alg:entropy} on the independent observation case of Section~\ref{sec:indep-obs}. The plot shows estimation error as a function of observed marginals (in \textit{log-log} scale), for four values of particle number $N$.}
    \label{fig:placeholder}
\end{figure}
We test the recovery of the true transition matrix \(A\) as \(t\) increases, using the Frobenius norm between \(A\) and its estimate~\eqref{eq:optimalA}; see~\Cref{fig:placeholder}.
The system has $5$ states, and an increasing number of observation pairs $\T \in \{5,\ldots,300\}$ is considered. For each value of $N$, a total of $K=30$ simulations with different observation pairs have been averaged. The shaded area is the 95\% confidence interval of the mean error across the repeats.

After an initial transient due to early noise, we observe that, when $N$ is large enough for the system to approach the regime \eqref{eq:noise}, the error converges at a rate close to \(t^{-1/2}\), typical of consistent estimators with increasing i.i.d.\ data~\cite[Ch.~5]{van2000asymptotic}.

\subsection{Sequential observation}\label{sec:seq-obs}
We now consider a scenario in which the number of particles \(N\) is small, and the marginal distributions correspond to consecutive observations of particle trajectories as in \eqref{eq:trajectory}, so that each observation pair is in the form $(\mu_t,\mu_{t+1})$, for $t\in [\T]$.
\begin{figure}
    \centering
    \includegraphics[width=\linewidth]{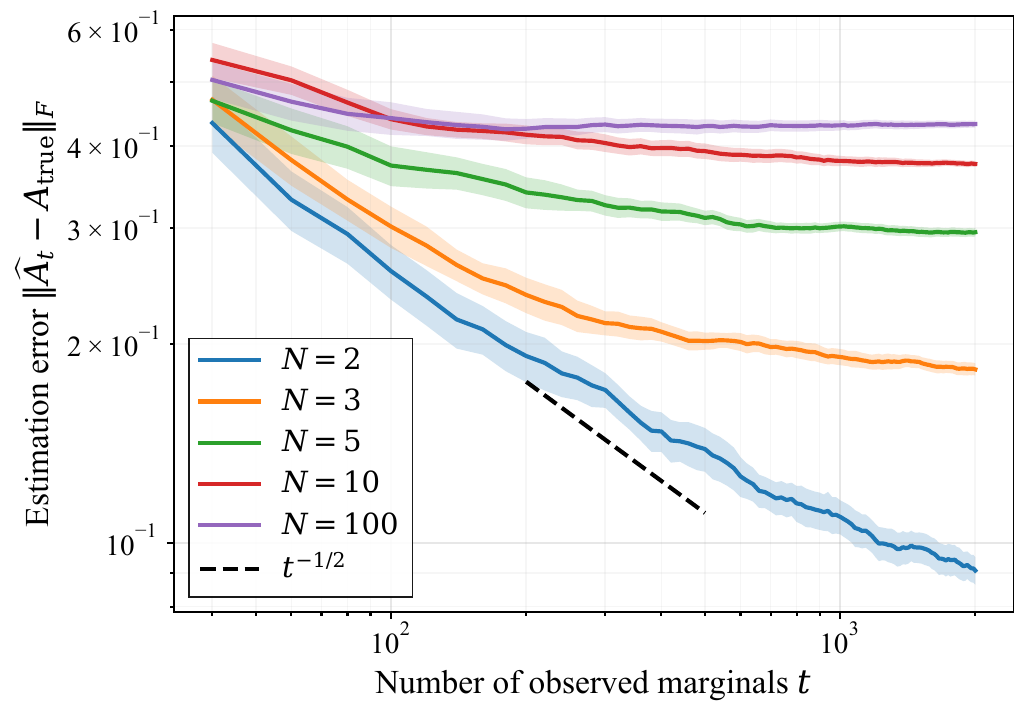}
    \caption{Numerical results for the application of Algorithm~\ref{alg:entropy} on the sequential observation case of Section~\ref{sec:seq-obs}, for $A$ given by \eqref{eq:fixedA}. The plot shows estimation error as a function of observed marginals (in \textit{log-log} scale), for five  values of particle number $N$.}
    \label{fig:placeholder2}
\end{figure}
The results are shown in \Cref{fig:placeholder2}. %Because of the limited excitation of the system, and of the need of satisfying \Cref{as:nonzero},
We consider a number of marginals in the range \(\{40,\ldots,2000\}\). The best performance is achieved with \(N=2\) particles: the reduced number of particles introduces higher stochasticity, making the transitions closer to independence. On the other hand, with few observations, a small number of particles such as $N=2$ may not explore all states, whereas larger values of $N$ yield a solution even for smaller $\T$, but they do not provide enough excitation to the system, and their estimation error reaches a nearly constant level. This phenomenon will be further discussed in the next subsection.

\subsection{Discussion on identifiability}
\label{sec:discussion}
If the matrix $A$ is irreducible and aperiodic, we observe exponential convergence of the marginal observations \eqref{eq:trajectory} to the unique stationary distribution $\pi$ of $A$ \cite{levin2017markov},
\begin{equation*}
   d(t):= \sup_{\mu_0}\norm{(A^t)^T \mu_0 -\pi}_{TV} \leq C \alpha^t,
\end{equation*}
for $C>0$, while the coefficient $\alpha \in (0,1)$ is related to the second largest (in magnitude) eigenvalue of $A$, and $\norm{\mu-\nu}_{TV}:=\frac{1}{2}\sum_i |\mu(i)-\nu(i)|$ is the total variation norm. It is then possible to define the mixing time
$$
t_{mix}(\epsilon):=\min\{ t: d(t)\leq \epsilon \} ,
$$
at which the distance to stationarity is $\epsilon$-small.
As a consequence, approaching the mixing time, any two markov chain $A$ and $B$ that share the same stationary state $\pi$ will lead to very similar trajectories, $(A^t)^T \mu_0 \approx (B^t)^T \mu_0 \approx \pi$.
This means that after a few time steps, the observation will reduce to (noisy) draws from the stationary distribution $\pi$, and it will not be possible to distinguish among the elements of the set
\begin{equation*}
    \mathcal{A}(\pi):=\{A \in \mR^{n \times n}_+: A \ett = \ett, \ A^T \pi = \pi\}.
\end{equation*}
The convex optimization problem \eqref{eq:problem2} will then pick a (possibly unique) optimal solution in  $\mathcal{A}(\pi)$, corresponding to the best fit to near-stationary, noisy marginals.
This behavior is observed in \Cref{fig:placeholder2}: as $N$ increases, the system receives insufficient excitation, and the estimation error stabilizes to an almost constant level. In particular, for $N=100$, the estimate of $A$ at $\T=2000$, averaged across the repeats, is
\begin{equation}
    \label{eq:Ahat} \hat{A}=
    \begin{bmatrix}
0.39 & 0.37 & 0.07 & 0.11 & 0.06 \\
0.30 & 0.23 & 0.08 & 0.32 & 0.07 \\
0.08 & 0.05 & 0.10 & 0.41 & 0.35 \\
0.24 & 0.06 & 0.34 & 0.27 & 0.09 \\
0.08 & 0.04 & 0.39 & 0.12 & 0.37 \\
\end{bmatrix}.
\end{equation}
Denoting with $\pi, \hat{\pi}$, the stationary distributions of $A, \hat{A}$, respectively, we observe that $\norm{\pi-\hat{\pi}}_{TV}\approx0.001$. Therefore, the method selected an element  $\hat{A} \in \mathcal{A}(\pi)$ (up to numerical approximation),
whose entries are all positive and well separated from zero.
This discrepancy likely originates from the zero entries of \eqref{eq:fixedA}, which are penalized by the maximum-entropy formulation that favors a more uniform distribution of mass, such as in \eqref{eq:Ahat}. We test this hypothesis by repeating the experiment with a randomly generated, positive $A$. The result are reported in \Cref{fig:placeholder3}. We observe better convergence properties than in \Cref{fig:placeholder2}, although slower than the $-{\textstyle\frac{1}{2}}$ rate observed with independent data.
\begin{figure}
    \centering
    \includegraphics[width=\linewidth]{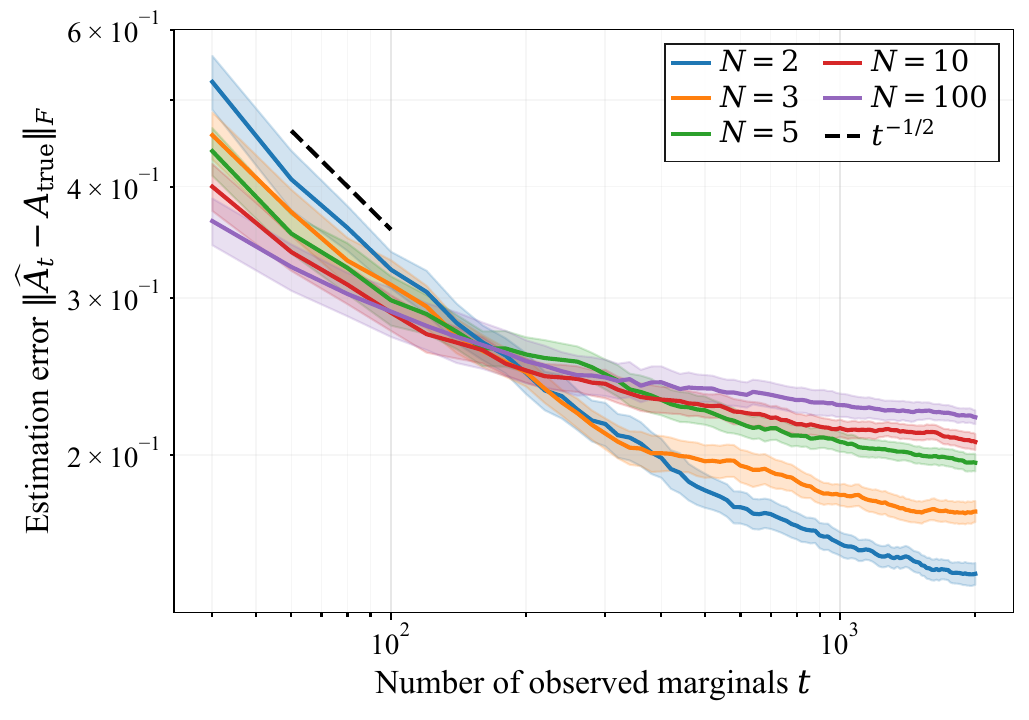}
    \caption{Numerical results for the application of Algorithm~\ref{alg:entropy} on the sequential observation case of Section~\ref{sec:seq-obs}, with random positive $A$. The plot shows estimation error as a function of observed marginals (in \textit{log-log} scale), for five  values of particle number $N$.}
    \label{fig:placeholder3}
\end{figure}

In order to be able to correctly identify the underlying transition probability matrix $A$, it is then necessary that the observations contain sufficient excitation for exploring its structure. 
As $\norm{\mu_t -\pi}$ decays exponentially with time (in the noise-free case $N\to \infty$), an effective way of improving identifiability is to collect several independent episodes with different initial data, analogous to the setup in \Cref{sec:indep-obs}.
Furthermore, when the number of particle $N$ is finite, it also offers a trade-off between noise and exploration of $A$. Low values of $N$ lead to higher stochasticity, which enhances exploration and makes the marginal distributions more informative, at the cost of a less accurate measurement of $A$.

\section{Conclusions}
\label{sec:conc}
%\mm{Michele} \jk{Johan} \at{Amir}  \ar{Axel}
%\newline
In this work, we addressed the problem of estimating a Markov chain from aggregate observations by formulating it as an entropy-regularized optimal transport problem, where both the cost function and the transport matrices are treated as optimization variables under marginal constraints given by the data. We proposed an entropic proximal algorithm that alternately updates the transport plans and the transition matrix, showing with numerical experiments that the method successfully retrieves the true transition probabilities, provided sufficient excitation of the system.

Future research directions include a theoretical investigation on identifiability, as well as providing a theoretical proof of consistency and establishing the convergence rate observed empirically under independent observations. Another line of work concerns deriving conditions for uniqueness of the optimal solution based on the properties of the observed data rather than on the solution itself. Possible generalizations include partial model identification, where either prior information on the transition structure is incorporated or certain entries of the transition matrix are constrained or fixed, as well as investigating extensions to hidden Markov models, cf.~\cite{singh2022learning}.

\addtolength{\textheight}{-8.5cm}   % This command serves to balance the column lengths
                                  % on the last page of the document manually. It shortens
                                  % the textheight of the last page by a suitable amount.
                                  % This command does not take effect until the next page
                                  % so it should come on the page before the last. Make
                                  % sure that you do not shorten the textheight too much.

%%%%%%%%%%%%%%%%%%%%%%%%%%%%%%%%%%%%%%%%%%%%%%%%%%%%%%%%%%%%%%%%%%%%%%%%%%%%%%%%

%%%%%%%%%%%%%%%%%%%%%%%%%%%%%%%%%%%%%%%%%%%%%%%%%%%%%%%%%%%%%%%%%%%%%%%%%%%%%%%%

%%%%%%%%%%%%%%%%%%%%%%%%%%%%%%%%%%%%%%%%%%%%%%%%%%%%%%%%%%%%%%%%%%%%%%%%%%%%%%%%
%\section*{APPENDIX}

%Appendixes should appear before the acknowledgment.

\bibliographystyle{abbrv}

\bibliography{./references}

\end{document}